\begin{document}
\title%
[On the essential norms of Toeplitz operators]
{On the essential norms of Toeplitz operators\\
on abstract Hardy spaces built upon
Banach function spaces}

\author[Oleksiy Karlovych, Eugene Shargorodsky\hfil \hfilneg] 
{Oleksiy Karlovych, Eugene Shargorodsky}  

\address{Oleksiy Karlovych \newline
Centro de Matem\'atica e Aplica\c{c}\~oes,
Departamento de Matem\'a\-tica,
Faculdade de Ci\^en\-cias e Tecnologia,
Universidade Nova de Lisboa,
Quinta da Torre,
2829--516 Caparica, Portugal}
\email{oyk@fct.unl.pt}

\address{Eugene Shargorodsky \newline
Department of Mathematics,
King's College London,
Strand, London WC2R 2LS,
United Kingdom}
\email{eugene.shargorodsky@kcl.ac.uk}
\subjclass[2000]{47B35, 46E30} 
\keywords{%
Banach function space,
abstract Hardy space,
Toeplitz operator,
essential norm}
\begin{abstract}
Let $X$ be a Banach function space over the unit circle
such that the Riesz projection $P$ is bounded on $X$ and let $H[X]$ be the 
abstract Hardy space built upon $X$. We show that the essential norm of the 
Toeplitz operator $T(a):H[X]\to H[X]$ coincides with $\|a\|_{L^\infty}$ for 
every $a\in C+H^\infty$ if and only if the essential norm of the backward
shift operator $T(\mathbf{e}_{-1}):H[X]\to H[X]$ is equal to one, where
$\mathbf{e}_{-1}(z)=z^{-1}$. This result extends an observation by
B\"ottcher, Krupnik, and Silbermann for the case of classical Hardy spaces.
\end{abstract}
\dedicatory{To the memory of Nikolai Vasilevski}

\maketitle \numberwithin{equation}{section}
\newtheorem{theorem}{Theorem}[section]
\newtheorem{corollary}[theorem]{Corollary}
\newtheorem{lemma}[theorem]{Lemma}
\newtheorem{remark}[theorem]{Remark}
\newtheorem{problem}[theorem]{Problem}
\newtheorem{example}[theorem]{Example}
\newtheorem{definition}[theorem]{Definition}
\allowdisplaybreaks
\section{Introduction and the main result}
For a Banach space $\mathcal{X}$, let $\mathcal{B}(\mathcal{X})$ denote the 
Banach algebra of bounded linear operators on $\mathcal{X}$ and let 
$\mathcal{K}(\mathcal{X})$ be the closed two-sided ideal of 
$\mathcal{B}(\mathcal{X})$ consisting of all compact linear operators
on $\mathcal{X}$. The norm of an operator $A\in\mathcal{B}(\mathcal{X})$ 
is denoted by $\|A\|_{\mathcal{B}(\mathcal{X})}$. The essential norm of 
$A \in \mathcal{B}(\mathcal{X})$ 
is defined as follows:
\[
\|A\|_{\mathcal{B}(\mathcal{X}),\mathrm{e}} 
:= 
\inf\{\|A - K\|_{\mathcal{B}(\mathcal{X})}\ : \  K \in \mathcal{K}(\mathcal{X})\}.
\]

For a function $f \in L^1$ on the unit circle 
$\mathbb{T} :=\left\{z \in \mathbb{C} : \ |z| = 1\right\}$
equipped with the Lebesgue measure $m$ normalised so that $m(\mathbb{T})=1$, 
let
\[
\widehat{f}(n) 
= 
\frac{1}{2\pi} \int_{-\pi}^\pi 
f\left(e^{i\theta}\right) e^{-i n\theta}\, d\theta , 
\quad 
n \in \mathbb{Z}
\]
be the Fourier coefficients of $f$. Let $X$ be a Banach function space on 
the unit circle $\mathbb{T}$. We postpone the definition of this notion until 
Section~\ref{subsec:BFS}. Here we only mention that the class of 
Banach function spaces is very reach, it includes all Lebesgue spaces 
$L^p$, $1\le p\le\infty$, Orlicz spaces $L^\Phi$ 
(see, e.g., \cite[Ch.~4, Section~8]{BS88}), 
and Lorentz spaces $L^{p,q}$, $1<p<\infty$, $1\le q\le\infty$ 
(see, e.g., \cite[Ch.~4, Section~4]{BS88}). Moreover, all mentioned
above spaces are rearrangement-invariant (see Section~\ref{subsec:riBFS}
for their definition).

Let
\[
H[X] := 
\left\{
g\in X\ :\ \widehat{g}(n)=0\quad\mbox{for all}\quad n<0
\right\} 
\]
denote the abstract Hardy space built upon the space $X$. In the case $X = L^p$, 
where $1\le p\le\infty$, we will use the standard notation $H^p := H[L^p]$.
Consider the operators $S$ and $P$, defined for a function 
$f\in L^1$ and an a.e. point $t\in\mathbb{T}$ by
\[
(Sf)(t):=\frac{1}{\pi i}\,\mbox{p.v.}\int_\mathbb{T}
\frac{f(\tau)}{\tau-t}\,d\tau,
\quad
(Pf)(t):=\frac{1}{2}(f(t)+(Sf)(t)),
\]
respectively, where the integral is understood in the Cauchy principal 
value sense. The operator $S$ is called the Cauchy singular 
integral operator and the operator $P$ is called the Riesz projection.
Assume that the Riesz projection is bounded on $X$.
For $a\in L^\infty$, the Toeplitz operator with symbol $a$ is defined by
\[
T(a)f=P(af),
\quad 
f\in H[X].
\]
It is clear that $T(a)\in\mathcal{B}(H[X])$ and 
\begin{equation}\label{eq:Toeplitz-bound}
\|T(a)\|_{\mathcal{B}(H[X]),{\rm e}}
\le 
\|T(a)\|_{\mathcal{B}(H[X])}
\le 
\|P\|_{\mathcal{B}(X)}\|a\|_{L^\infty}.
\end{equation}
Let $C$ denote the Banach space of all complex-valued continuous functions
on $\mathbb{T}$ with the supremum norm and let 
\[
C+H^\infty:=\{f\in L^\infty\ :\ f=g+h,\ g\in C,\ h\in H^\infty\}.
\]
In 1967, Sarason observed that $C+H^\infty$ is a closed subalgebra
of $L^\infty$ (see, e.g., \cite[Ch.~IX, Theorem~2.2]{G07} for the proof 
of this fact).

Let $1<p<\infty$ and $a\in L^\infty$. It follows from 
\cite[Theorem~2.30]{BS06} and \eqref{eq:Toeplitz-bound} that
\[
\|a\|_{L^\infty}
\le 
\|T(a)\|_{\mathcal{B}(H^p),\mathrm{e}}
\le 
\|T(a)\|_{\mathcal{B}(H^p)}
\le
\|P\|_{\mathcal{B}(L^p)}\|a\|_{L^\infty}.
\]
I.~Gohberg and N.~Krupnik \cite[Theorem~6]{GK68} proved that 
$\|P\|_{\mathcal{B}(L^p),\mathrm{e}}\ge 1/\sin(\pi/p)$ and conjectured that
$\|P\|_{\mathcal{B}(L^p)}=1/\sin(\pi/p)$. This conjecture was confirmed
by B.~Hollenbeck and I.~Verbitsky in \cite{HV00}. Thus
\begin{equation}\label{eq:ess-norm-Toeplitz}
\|a\|_{L^\infty} 
\le
\|T(a)\|_{\mathcal{B}(H^p),\mathrm{e}}
\le 
1/\sin(\pi/p)
\|a\|_{L^\infty},
\quad
a\in L^\infty.
\end{equation}
A.~B\"ottcher, N.~Krupnik, and B.~Silbermann \cite[Section~7.6]{BKS88} 
asked whether the essential norm of Toeplitz operators $T(a)$ with 
$a\in C$ acting on the Hardy spaces $H^p$ is independent of $p\in(1,\infty)$. 
The second author answered this question in the negative \cite{S21}. 
More precisely, it was shown that 
\begin{equation}\label{eq:Shardgorodsy-first-result}
\|T(a)\|_{\mathcal{B}(H^p),\mathrm{e}}=\|a\|_{L^\infty}
\quad\mbox{for all}\quad a\in C
\quad\mbox{if and only if}\quad 
p=2.
\end{equation}
Nevertheless, the following estimates
for $\|T(a)\|_{\mathcal{B}(H^p),\mathrm{e}}$ were obtained for $1<p<\infty$
and $a\in C+H^\infty$:
\[
\|a\|_{L^\infty} 
\le 
\|T(a)\|_{\mathcal{B}(H^p),\mathrm{e}}  
\le 
\min\left\{2^{|1-2/p|},1/\sin(\pi/p)\right\} \|a\|_{L^\infty}
\]
(see \eqref{eq:ess-norm-Toeplitz} and \cite[Theorem~4.1]{S21}).

We will use the following notation:
\[
\mathbf{e}_m(z) := z^m, 
\quad z \in \mathbb{C}, 
\quad m \in \mathbb{Z}.
\]

The following result extends \eqref{eq:Shardgorodsy-first-result} to the class
of rearrangement-invariant Banach function spaces.
\begin{theorem}\label{th:known}
Let $X$ be a rearrangement-invariant Banach function space such that the
Riesz projection $P$ is bounded on $X$. Then the following statements
are equivalent:

\begin{enumerate}
\item[(a)] the equality
\begin{equation}\label{eq:known}
\|T(a)\|_{\mathcal{B}(H[X]),\mathrm{e}}=\|a\|_{L^\infty}
\end{equation}
holds for every Toeplitz operator $T(a):H[X]\to H[X]$ with $a\in L^\infty$;

\item[(b)] equality \eqref{eq:known}
holds for every Toeplitz operator $T(a):H[X]\to H[X]$ with $a\in C+H^\infty$;

\item[(c)] 
$\|T(\mathbf{e}_{-1})\|_{\mathcal{B}(H[X]),\mathrm{e}}=1$;

\item[(d)]
$\|T(\mathbf{e}_{-1})\|_{\mathcal{B}(H[X])}=1$;

\item[(e)]
$\|P\|_{\mathcal{B}(X)}=1$;

\item[(f)]
$X=L^2$ and there exists $C\in(0,\infty)$ such that
\[
\|g\|_X=C\|g\|_{L^2}\quad\mbox{for all}\quad g\in X.
\]
\end{enumerate}
\end{theorem}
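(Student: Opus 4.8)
The plan is to treat the six conditions as one cycle, isolating two genuine rigidity statements and deriving everything else from two unconditional facts. First I would record two identities valid for every rearrangement-invariant $X$ on which $P$ is bounded. Writing $E_0 f := \widehat{f}(0)\mathbf{e}_0$ for the rank-one projection of $H[X]$ onto the constants, a direct computation gives $T(\mathbf{e}_{-1})f = \mathbf{e}_{-1}(f - E_0 f)$ for $f\in H[X]$; since $|\mathbf{e}_{-1}|\equiv 1$ and $X$ is rearrangement-invariant, this yields $\|T(\mathbf{e}_{-1})f\|_X = \|f-E_0 f\|_X$, hence $\|T(\mathbf{e}_{-1})\|_{\mathcal{B}(H[X])} = \|I-E_0\|_{\mathcal{B}(H[X])}\ge 1$. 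Second, $T(\mathbf{e}_{-1})^M = T(\mathbf{e}_{-M})$, and for a trigonometric polynomial $f$ of degree at most $M$ one has $Pf = T(\mathbf{e}_{-M})(\mathbf{e}_M f)$ with $\|\mathbf{e}_M f\|_X=\|f\|_X$; since trigonometric polynomials are dense in $X$, this gives the key identity $\|P\|_{\mathcal{B}(X)} = \sup_{M\ge 1}\|T(\mathbf{e}_{-1})^M\|_{\mathcal{B}(H[X])}$. Finally, mirroring the $H^p$ bound behind \eqref{eq:ess-norm-Toeplitz}, I would prove the lower bound $\|T(a)\|_{\mathcal{B}(H[X]),\mathrm{e}}\ge\|a\|_{L^\infty}$ for all $a\in L^\infty$, using that the isometric shifts $T(\mathbf{e}_{n})$ tend weakly to $0$, so that they annihilate any compact correction while $T(\mathbf{e}_{-n})T(a)T(\mathbf{e}_{n})$ recovers multiplication by $a$.

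With these tools in hand, most of the cycle is routine. From $\|P\|_{\mathcal{B}(X)}=1$ and \eqref{eq:Toeplitz-bound} I get $\|T(a)\|_{\mathcal{B}(H[X])}\le\|a\|_{L^\infty}$, which together with the lower bound above gives (e)$\Rightarrow$(a); then (a)$\Rightarrow$(b) is trivial, and (b)$\Rightarrow$(c) follows because $\mathbf{e}_{-1}\in C$ with $\|\mathbf{e}_{-1}\|_{L^\infty}=1$. The identity $\|P\|_{\mathcal{B}(X)}=\sup_M\|T(\mathbf{e}_{-1})^M\|_{\mathcal{B}(H[X])}$ makes (d)$\Leftrightarrow$(e) transparent: if $\|T(\mathbf{e}_{-1})\|_{\mathcal{B}(H[X])}=1$ then every power has norm at most $1$, so $\|P\|_{\mathcal{B}(X)}\le1$, while conversely $\|T(\mathbf{e}_{-1})\|_{\mathcal{B}(H[X])}\le\|P\|_{\mathcal{B}(X)}$. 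Combining (d) with essential $\le$ full and the lower bound gives (d)$\Rightarrow$(c), and (f)$\Rightarrow$(e) is immediate since on $L^2$ the operator $P$ is an orthogonal projection. Thus the only arrows still missing are (e)$\Rightarrow$(f) and some return arrow from the essential-norm condition (c) back to the full-norm/structural conditions.

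For (e)$\Rightarrow$(f) I would show that $\|P\|_{\mathcal{B}(X)}=1$ forces the norm of $X$ to obey the parallelogram law on trigonometric polynomials, so that $X$ is a Hilbert space; a rearrangement-invariant Hilbert function space on $\mathbb{T}$ must then be $L^2$ with norm proportional to $\|\cdot\|_{L^2}$, which is (f). The mechanism is that $\|P\|_{\mathcal{B}(X)}=1$ says each $\mathbf{e}_n$ with $n\ge0$ is Birkhoff--James orthogonal to the closed span of the $\mathbf{e}_k$ with $k<0$; applying the same to the complementary projection and to frequency-shifted configurations, and exploiting the equimeasurability of unimodular rotations in a rearrangement-invariant space, should yield the symmetric orthogonality relations that characterize an inner product.

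The crux is the return arrow, whose natural target is (c)$\Rightarrow$(d) (equivalently (c)$\Rightarrow$(e) or (c)$\Rightarrow$(f)), and this is where the difficulty concentrates, because the essential norm of $T(\mathbf{e}_{-1})$ is in general \emph{strictly smaller} than its full norm. Testing on $f=c\mathbf{e}_0+\mathbf{e}_N h$ with $h$ a fixed mean-zero analytic polynomial and $N\to\infty$ shows that $\|T(\mathbf{e}_{-1})\|_{\mathcal{B}(H[X]),\mathrm{e}}$ is governed by the phase-randomized quantity $\lim_N\|c\mathbf{e}_0+\mathbf{e}_N h\|_X$ rather than by $\|c\mathbf{e}_0+h\|_X$ itself, and the two differ off $L^2$. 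The plan is to show that the essential-norm equality in (c) forces this randomized norm to be Euclidean on the two-dimensional configurations $(\mathbf{e}_0,\mathbf{e}_N h)$, and then to propagate this, via rearrangement invariance and density of trigonometric polynomials, to the parallelogram law for the whole norm, landing directly in (f). I expect this decoupling analysis — extracting full Hilbertian rigidity from a purely asymptotic, high-frequency condition — to be the hardest step, appreciably more delicate than the direct implication (e)$\Rightarrow$(f).
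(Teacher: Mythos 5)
Your overall architecture (the cycle (f) $\Rightarrow$ (a) $\Rightarrow$ (b) $\Rightarrow$ (c), the block (d) $\Leftrightarrow$ (e) $\Leftrightarrow$ (f), and a bridge from (c) back to (d)) matches the paper's, and the elementary arrows are fine. But the two load-bearing steps are only gestured at, and your plan for the second one starts from a premise that is false in the setting of the theorem. For (e) $\Rightarrow$ (f) you write that $\|P\|_{\mathcal{B}(X)}=1$ ``should yield the symmetric orthogonality relations that characterize an inner product''; that derivation is precisely the hard content of \cite[Theorems~1.1--1.2]{KS23-JFA} (which in fact holds for arbitrary, not necessarily rearrangement-invariant, Banach function spaces), and nothing in your sketch indicates how to extract symmetry of Birkhoff--James orthogonality, or the parallelogram law, from the norm-one-ness of $P$. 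A proof of Theorem~\ref{th:known} must either supply this argument or cite it; a one-sentence heuristic does not close the gap.

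The return arrow is where the plan goes astray. You assert that $\|T(\mathbf{e}_{-1})\|_{\mathcal{B}(H[X]),\mathrm{e}}$ is ``in general strictly smaller'' than $\|T(\mathbf{e}_{-1})\|_{\mathcal{B}(H[X])}$ and therefore propose a delicate ``decoupling analysis'' to pass from (c) directly to (f). In fact, for rearrangement-invariant $X$ the two norms \emph{coincide}: this is \cite[Theorem~1.2]{KS22-PAMS}, it is exactly the lemma the paper uses to obtain (c) $\Leftrightarrow$ (d), and rearrangement invariance is what makes it work (roughly, measure-preserving substitutions push near-maximizers to high frequencies where compact perturbations are negligible). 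Your substitute argument is not viable as described: the test functions $c\mathbf{e}_0+\mathbf{e}_Nh$ with $c\neq0$ are not weakly null, so they do not control $\|T(\mathbf{e}_{-1})-K\|$ from below for compact $K$, and no mechanism is offered for promoting an asymptotic statement about $\lim_N\|c\mathbf{e}_0+\mathbf{e}_Nh\|_X$ to the parallelogram law on all of $X$. Two smaller gaps: the identity $\|P\|_{\mathcal{B}(X)}=\sup_M\|T(\mathbf{e}_{-1})^M\|_{\mathcal{B}(H[X])}$ invokes density of trigonometric polynomials, which fails in non-separable rearrangement-invariant spaces (e.g.\ $L^{p,\infty}$, $1<p<\infty$) on which $P$ is nevertheless bounded, so a Fej\'er-mean plus Fatou-property approximation is needed instead; and the lower bound $\|T(a)\|_{\mathcal{B}(H[X]),\mathrm{e}}\ge\|a\|_{L^\infty}$ is itself a nontrivial cited result (\cite[Theorem~5.2]{KS23-IEOT}), not a routine consequence of ``$T(\mathbf{e}_n)\to0$ weakly'', which again can fail without separability.
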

The implication (f) $\Longrightarrow$ (a) follows from inequalities 
\eqref{eq:ess-norm-Toeplitz}, which become equalities for $p=2$. 
The implications (a) $\Longrightarrow$ (b) $\Longrightarrow$ (c) are 
trivial. The equivalences 
(d) $\Longleftrightarrow$ (e) $\Longleftrightarrow$ (f)
were proved in \cite[Theorems~1.1--1.2]{KS23-JFA} for arbitrary 
(not necessarily rearrangement-invariant) Banach function spaces $X$. The 
equality 
\[
\|T(\mathbf{e}_{-1})\|_{\mathcal{B}(H[X])}=
\|T(\mathbf{e}_{-1})\|_{\mathcal{B}(H[X]),\mathrm{e}}
\]
was proved in \cite[Theorem~1.2]{KS22-PAMS} for rearrangement-invariant 
Banach function spaces $X$, which gives the equivalence 
(c) $\Longleftrightarrow$ (d) and completes the proof of 
Theorem~\ref{th:known}.

B\"ottcher, Krupnik, and Silbermann \cite[p.~472]{BKS88} provided
an argument allowing to show directly  that (c) $\Longrightarrow$ (b) in 
the case of classical Hardy spaces $H^p$, $1<p<\infty$. The aim of this paper
is to show that their reasoning can be extended to the case of arbitrary
Banach function spaces (not necessarily rearrangement-invariant)
on which the Riesz projection $P$ is bounded. Our main result is the 
following. 
\begin{theorem}[Main result]
\label{th:BKS} 
Let $X$ be a Banach function space on which the Riesz projection is bounded.
Then the following statements are equivalent:
\begin{enumerate}
\item[(i)]
the equality
\[
\|T(\mathbf{e}_{-1})\|_{\mathcal{B}(H[X]),\mathrm{e}} = 1 
\]
holds for the backward shift operator $T(\mathbf{e}_{-1}) : H[X] \to H[X]$;
 
\item[(ii)]
the equality
\[
\|T(a)\|_{\mathcal{B}(H[X]),\mathrm{e}}  = \|a\|_{L^\infty} 
\]
holds for every Toeplitz operator $T(a):H[X]\to H[X]$ with
$a \in C + H^\infty$.
\end{enumerate}
\end{theorem}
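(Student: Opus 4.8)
The plan is to prove the nontrivial implication (i) $\Longrightarrow$ (ii); the reverse implication (ii) $\Longrightarrow$ (i) is immediate, since $\mathbf{e}_{-1}\in C\subset C+H^\infty$ and $\|\mathbf{e}_{-1}\|_{L^\infty}=1$, so applying (ii) to $a=\mathbf{e}_{-1}$ yields exactly (i). For (i) $\Longrightarrow$ (ii) I would split the desired equality into the two inequalities
\[
\|a\|_{L^\infty}\le \|T(a)\|_{\mathcal{B}(H[X]),\mathrm{e}}\le\|a\|_{L^\infty},\qquad a\in C+H^\infty,
\]
and note that the lower bound holds for every $a\in L^\infty$ and every admissible $X$ (it is the ``always true'' half already visible in \eqref{eq:ess-norm-Toeplitz}), whereas only the upper bound will use hypothesis~(i).

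For the upper bound, write $\pi\colon\mathcal{B}(H[X])\to\mathcal{B}(H[X])/\mathcal{K}(H[X])$ for the canonical map onto the Calkin algebra, so that $\|A\|_{\mathcal{B}(H[X]),\mathrm{e}}=\|\pi(A)\|$, and set $U:=T(\mathbf{e}_1)$ and $V:=T(\mathbf{e}_{-1})$. Since $\mathbf{e}_1\in H^\infty$ and $|\mathbf{e}_1|\equiv1$, the operator $U=M_{\mathbf{e}_1}$ (multiplication by $z$) is an isometry of $H[X]$, and letting both sides act on monomials gives the elementary identity
\[
U^nV^n=I-P_0^{(n)},
\]
where $P_0^{(n)}$ is the finite-rank projection onto $\operatorname{span}\{\mathbf{e}_0,\dots,\mathbf{e}_{n-1}\}$. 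Hence $AU^nV^n=A-AP_0^{(n)}\equiv A$ modulo $\mathcal{K}(H[X])$ for every $A\in\mathcal{B}(H[X])$, and applying $\pi$ yields the factorization $\pi(A)=\pi(AU^n)\,\pi(V)^n$. Taking norms and invoking (i), i.e. $\|\pi(V)\|=\|V\|_{\mathcal{B}(H[X]),\mathrm{e}}=1$, I obtain the crucial estimate
\[
\|A\|_{\mathcal{B}(H[X]),\mathrm{e}}\le\|AU^n\|_{\mathcal{B}(H[X]),\mathrm{e}}\,\|V\|_{\mathcal{B}(H[X]),\mathrm{e}}^{\,n}\le\|AU^n\|_{\mathcal{B}(H[X])},\qquad n\in\mathbb{N}.
\]
This is the one and only place where (i) enters, and it replaces the crude bound that would involve the possibly large constant $\|P\|_{\mathcal{B}(X)}$.

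It then remains to make $\|AU^n\|$ small for $A=T(a)$. I would reduce $a\in C+H^\infty$ to the subclass of symbols $b=p+h$ with $p$ a trigonometric polynomial and $h\in H^\infty$, which is dense in $C+H^\infty$ in $L^\infty$-norm; since $\|T(a)-T(b)\|\le\|P\|_{\mathcal{B}(X)}\|a-b\|_{L^\infty}$, approximating the continuous part of $a$ by $p$ transfers the bound to $a$ at a cost of $\varepsilon$. For such $b$ with $\deg p\le N$ and $n\ge N$, the product $b\,\mathbf{e}_n f$ has only nonnegative Fourier coefficients for every $f\in H[X]$, so $P$ acts as the identity on it and hence $T(b)U^n=M_b U^n$ on $H[X]$. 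As $U^n$ is an isometry and $\|M_b\|_{\mathcal{B}(X)}=\|b\|_{L^\infty}$ (a standard property of Banach function spaces), this gives $\|T(b)U^n\|\le\|b\|_{L^\infty}$; combined with the crucial estimate and the approximation step it upgrades to $\|T(a)\|_{\mathcal{B}(H[X]),\mathrm{e}}\le\|a\|_{L^\infty}$.

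For the lower bound $\|T(a)\|_{\mathcal{B}(H[X]),\mathrm{e}}\ge\|a\|_{L^\infty}$ I would fix $\varepsilon>0$, choose a point about which $|a|$ exceeds $\|a\|_{L^\infty}-\varepsilon$ on a set of positive measure, and build a normalized sequence $(f_j)\subset H[X]$ tending weakly to $0$ (so that $\|Kf_j\|_X\to0$ for every compact $K$) while $\liminf_j\|T(a)f_j\|_X\ge\|a\|_{L^\infty}-\varepsilon$; testing $\|T(a)-K\|$ against $(f_j)$ then gives the claim. I expect this construction to be the main obstacle: in the classical $H^p$ setting one may use rotations and explicit peak functions, but for a general, not necessarily rearrangement-invariant, Banach function space no such symmetry is available, and one must produce high-frequency analytic test functions that are simultaneously spatially concentrated and weakly null, controlling $\|T(a)f_j\|_X$ from below purely through the boundedness of $P$ and the identity $\|M_a\|_{\mathcal{B}(X)}=\|a\|_{L^\infty}$. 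The upper bound, by contrast, is an essentially formal transcription of the B\"ottcher--Krupnik--Silbermann factorization, with the entire force of hypothesis~(i) concentrated in the single estimate displayed above.
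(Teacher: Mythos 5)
Your treatment of the upper bound is correct and is, in substance, the paper's own argument. The identity $U^nV^n=I-P_n$ with $P_n$ finite rank, the passage to the Calkin algebra giving $\|T(b)\|_{\mathcal{B}(H[X]),\mathrm{e}}\le\|T(b)U^n\|_{\mathcal{B}(H[X])}\,\|T(\mathbf{e}_{-1})\|_{\mathcal{B}(H[X]),\mathrm{e}}^n$, the computation $T(b)U^n=M_bU^n$ for $n$ large (so that the right-hand side is bounded by $\|b\|_{L^\infty}$), and the density of the class $\{p+h\}=\{\mathbf{e}_{-n}h:\ n\in\mathbb{N},\ h\in H^\infty\}$ in $C+H^\infty$ combined with $\|T(a-b)\|_{\mathcal{B}(H[X])}\le\|P\|_{\mathcal{B}(X)}\|a-b\|_{L^\infty}$ --- all of this mirrors Lemmas~\ref{le:boundedness-T-special}, \ref{le:factorization} and \ref{le:BKS} and the limiting argument of Section~\ref{sec:proof-of-BKS}. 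The only cosmetic difference is that the paper multiplies by $T(\mathbf{e}_{-1})^n$ on the left through the exact operator identity $T(\mathbf{e}_{-n}h)=T(\mathbf{e}_{-1})^nT(h)$, whereas you multiply by $U^n$ on the right modulo compacts; both carry the entire force of hypothesis (i) in the single estimate $\|\pi(T(\mathbf{e}_{-1}))\|=1$.

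The gap is the lower bound $\|a\|_{L^\infty}\le\|T(a)\|_{\mathcal{B}(H[X]),\mathrm{e}}$. You first assert that it is the ``always true'' half visible in \eqref{eq:ess-norm-Toeplitz}, but that display concerns only $X=L^p$; for a general Banach function space this inequality is a genuine theorem, and you then concede that you do not know how to produce, for a space with no rearrangement invariance, a normalized weakly null sequence $(f_j)\subset H[X]$ with $\liminf_j\|T(a)f_j\|_X\ge\|a\|_{L^\infty}-\varepsilon$. This is precisely the point where the paper does not argue from scratch either: it invokes \cite[Theorem~5.2]{KS23-IEOT}, and in fact only needs the inequality for symbols in the dense class $G$. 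So your proposal correctly isolates the two halves and fully proves one of them, but the other is left as an acknowledged sketch; as written the proof is incomplete, and to close it you must either cite that prior result or actually carry out the concentration construction you describe, which is substantially harder than the rest of the argument (rotations and explicit peak functions are unavailable, and the lattice and Fatou axioms alone do not obviously yield weak nullity of the natural test functions).
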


The paper is organised as follows. In Section~\ref{sec:preliminaries},
we recall the definition of the class of Banach function spaces and of its
distinguished subclasss of rearrangement-invariant Banach function spaces.
In Section~\ref{sec:auxiliary}, we prove that the Toeplitz operators
$T(\mathbf{e}_{-n}h)$ with $n\in\mathbb{Z}_+:=\{0,1,2,\dots\}$ and 
$h\in H^\infty$ are bounded on $H[X]$. Further, we 
show that $T(\mathbf{e}_{-1})T(\mathbf{e}_{-n}h)=T(\mathbf{e}_{-n-1}h)$
for $n\in\mathbb{Z}_+$ and $h\in H^\infty$ on the space $H[X]$.
Although our main results have been obtained under the assumption that 
$P$ is bounded on $X$, we do not make this assumption in Section~\ref{sec:auxiliary}
as we believe that this more general case is of an independent interest.
By using the results of Section~\ref{sec:auxiliary}, we prove 
Theorem~\ref{th:BKS} in Section~\ref{sec:proof-of-BKS}.
\section{Preliminaries}\label{sec:preliminaries}
\subsection{Banach function spaces}\label{subsec:BFS}
Let $\mathcal{M}$ be the set of all measurable complex-valued functions on 
$\mathbb{T}$ equipped with the normalized Lebesgue measure $m$ and let 
$\mathcal{M}^+$ be the subset of functions in $\mathcal{M}$ whose values lie 
in $[0,\infty]$. Following \cite[Ch.~1, Definition~1.1]{BS88}, a mapping 
$\rho: \mathcal{M}^+\to [0,\infty]$ is called a Banach function norm if, 
for all functions $f,g, f_n\in \mathcal{M}^+$ with $n\in\mathbb{N}$, and for 
all constants $a\ge 0$, the following  properties hold:
\begin{eqnarray*}
{\rm (A1)} & &
\rho(f)=0  \Leftrightarrow  f=0\ \mbox{a.e.},
\
\rho(af)=a\rho(f),
\
\rho(f+g) \le \rho(f)+\rho(g),\\
{\rm (A2)} & &0\le g \le f \ \mbox{a.e.} \ \Rightarrow \ 
\rho(g) \le \rho(f)
\quad\mbox{(the lattice property)},\\
{\rm (A3)} & &0\le f_n \uparrow f \ \mbox{a.e.} \ \Rightarrow \
       \rho(f_n) \uparrow \rho(f)\quad\mbox{(the Fatou property)},\\
{\rm (A4)} & & \rho(1) <\infty,\\
{\rm (A5)} & &\int_{\mathbb{T}} f(t)\,dm(t) \le C\rho(f)
\end{eqnarray*}
with {a constant} $C \in (0,\infty)$ that may depend on $\rho$,  but is 
independent of $f$. When functions differing only on a set of measure zero 
are identified, the set $X$ of all functions $f\in \mathcal{M}$ for which 
$\rho(|f|)<\infty$ is called a Banach function space. For each $f\in X$, 
the norm of $f$ is defined by $\|f\|_X :=\rho(|f|)$. The set $X$ equipped 
with the natural linear space operations and this norm becomes a Banach 
space (see \cite[Ch.~1, Theorems~1.4 and~1.6]{BS88}). 
\subsection{Rearrangement-invariant Banach function spaces}\label{subsec:riBFS}
Let $\mathcal{M}_0$ (resp. $\mathcal{M}_0^+$)
denote the set of all a.e. finite functions in $\mathcal{M}$
(resp. in $\mathcal{M}^+$).
Following \cite[Chap.~2, Definitions~1.1 and~1.2]{BS88}, the 
distribution function $m_f$ of a function $f\in \mathcal{M}_0$ 
is given by
\[
m_f(\lambda) := 
m\left\{t \in\mathbb{T}\ :\  |f(t)| > \lambda\right\},
\quad\lambda\ge 0.
\]
Two functions $f,g\in \mathcal{M}_0$ are said to be 
equimeasurable if $m_f(\lambda)=m_g(\lambda)$ for all $\lambda\ge 0$. 
A Banach function norm $\rho:\mathcal{M}\to[0,\infty]$ is said 
to be rearrangement-invariant if $\rho(f)=\rho(g)$ for every pair of
equimeasurable functions $f,g\in\mathcal{M}_0^+$. 
In that case, the Banach function space $X$ generated by $\rho$ is 
said to be a rearrangement-invariant Banach function space (see 
\cite[Ch.~2, Definition~4.1]{BS88}). 
\section{Auxiliary results}\label{sec:auxiliary}
\subsection{Operator $P_n$}
For $n\in\mathbb{N}$ and $f\in L^1$, put
\[
P_n f := \sum_{k = 0}^{n-1} \widehat{f}(k) \mathbf{e}_k\in H^1.
\]
\begin{lemma}\label{le:boundedness-Pn}
For every $n\in\mathbb{N}$, the operator $P_n : L^1 \to H^\infty$ is bounded
and 
\[
\|P_n\|_{\mathcal{B}(L^1,H^\infty)}\le n.
\]
\end{lemma}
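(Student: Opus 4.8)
The plan is to realise $P_n$ as a convolution operator and then estimate its kernel crudely. First I would observe that, for any $f\in L^1$, the function $P_n f=\sum_{k=0}^{n-1}\widehat{f}(k)\mathbf{e}_k$ is an analytic polynomial of degree at most $n-1$; in particular it is bounded and belongs to $H^\infty$, and its $H^\infty$-norm coincides with its $L^\infty$-norm. Hence the whole statement reduces to the pointwise estimate $|(P_n f)(t)|\le n\|f\|_{L^1}$ for a.e.\ $t\in\mathbb{T}$.

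Next I would substitute the integral formula for $\widehat{f}(k)$ and interchange the finite sum with the integral (which is legitimate, the sum having only finitely many terms), writing, for $t=e^{i\phi}$,
\[
(P_n f)(e^{i\phi})
=
\frac{1}{2\pi}\int_{-\pi}^{\pi}
f(e^{i\theta})\, D_n(\phi-\theta)\, d\theta,
\qquad
D_n(\psi):=\sum_{k=0}^{n-1}e^{ik\psi}.
\]
The kernel $D_n$ satisfies $|D_n(\psi)|\le n$ for every $\psi$, simply by the triangle inequality, since it is a sum of $n$ terms each of modulus one.

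Combining these observations, and recalling that
$\|f\|_{L^1}=\frac{1}{2\pi}\int_{-\pi}^{\pi}|f(e^{i\theta})|\,d\theta$
because $m$ is normalised so that $m(\mathbb{T})=1$, I obtain
$|(P_n f)(e^{i\phi})|\le n\|f\|_{L^1}$, and therefore
$\|P_n\|_{\mathcal{B}(L^1,H^\infty)}\le n$. There is no genuine obstacle here: the argument is a one-line kernel estimate, and the only points requiring a little care are the identification of the $H^\infty$-norm of the polynomial $P_n f$ with its $L^\infty$-norm and the chosen normalisation of the measure.
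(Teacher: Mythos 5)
Your proof is correct and is essentially the paper's argument in a different packaging: the paper bounds $\|P_nf\|_{L^\infty}\le\sum_{k=0}^{n-1}|\widehat f(k)|\le n\|f\|_{L^1}$ termwise using $|\widehat f(k)|\le\|f\|_{L^1}$, while you bound the convolution kernel $D_n$ by $n$ pointwise and integrate — the same triangle-inequality estimate applied in the opposite order. Both yield the same constant $n$ with no essential difference.
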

\begin{proof}
For every $f \in L^1$, on has
\begin{align*}
\|P_nf\|_{L^\infty} 
&= 
\left\|\sum_{k = 0}^{n-1} \widehat{f}(k) \mathbf{e}_k\right\|_{L^\infty} 
\le 
\sum_{k = 0}^{n-1} \left|\widehat{f}(k)\right| \|\mathbf{e}_k\|_{L^\infty}
\\
&= 
\sum_{k = 0}^{n-1} \left|\widehat{f}(k)\right| 
\le 
\sum_{k = 0}^{n-1} \|f\|_{L^1} = n   \|f\|_{L^1}.
\end{align*}
So, $P_n\in\mathcal{B}(L^1, H^\infty)$ and 
$\|P_n\|_{\mathcal{B}(L^1, H^\infty)}\le n$.
\end{proof}
\begin{corollary}\label{cor:boundedness-Pn}
Let $X$ be a Banach function space. For every $n\in\mathbb{N}$,
the operator $P_n : X \to H[X]$ is bounded.
\end{corollary}
\begin{proof}
Axioms (A4) and (A5) imply the existence of a constant $C>0$ such that
\[
\|P_n\|_{\mathcal{B}(X,H[X])}  
\le C  
\|P_n\|_{\mathcal{B}(L^1, H^\infty)}
\le C n,
\]
which completes the proof.
\end{proof}
\subsection{Boundendess of a special Toeplitz operator}
We will need the following auxiliary lemma
\begin{lemma}[{\cite[Lemma~3.1]{K21}}]
\label{le:Pf=g}
Let $f \in L^1$. Suppose there exists $g \in H^1$ such that 
$\widehat{f}(n) = \widehat{g}(n)$ for all $n \ge 0$. Then $Pf = g$.
\end{lemma}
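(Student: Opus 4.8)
The plan is to replace the singular-integral description of $P$ by the analytic-function picture and then read off the conclusion from Fourier coefficients. To $f\in L^1$ I would associate its Cauchy integral
\[
\Phi(z):=\frac{1}{2\pi i}\int_{\mathbb{T}}\frac{f(\tau)}{\tau-z}\,d\tau,
\qquad |z|<1,
\]
which is well defined and holomorphic on the open unit disk because, for each fixed $z$, the kernel $(\tau-z)^{-1}$ is bounded on $\mathbb{T}$. The classical Plemelj--Sokhotski formulas identify the nontangential boundary values of $\Phi$ with $Pf$: indeed $Sf$ exists a.e.\ for $f\in L^1$ by Privalov's theorem, and $\Phi_+(t)=(Pf)(t)$ for a.e.\ $t\in\mathbb{T}$. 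Hence it suffices to show that $\Phi$ is the holomorphic extension of $g$ and then invoke Fatou's theorem.

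The key computation is the Taylor expansion of $\Phi$. For $|z|<1=|\tau|$ one has the expansion $\dfrac{1}{\tau-z}=\sum_{k=0}^{\infty}z^k\tau^{-k-1}$, convergent uniformly in $\tau\in\mathbb{T}$ for each fixed $z$. Writing $d\tau=i\tau\,d\theta$ on $\mathbb{T}$ and integrating term by term against the fixed $L^1$ density $f$ yields
\[
\Phi(z)=\sum_{k=0}^{\infty}\widehat{f}(k)\,z^k,\qquad |z|<1.
\]

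Now I would invoke the hypothesis. Since $\widehat{f}(k)=\widehat{g}(k)$ for every $k\ge 0$, while $g\in H^1$ forces $\widehat{g}(k)=0$ for $k<0$, the previous display reads $\Phi(z)=\sum_{k\ge 0}\widehat{g}(k)z^k=G(z)$, where $G$ denotes the holomorphic extension of $g$ to the disk. By Fatou's theorem the nontangential boundary value of $G$ equals $g$ a.e.\ on $\mathbb{T}$, so $\Phi_+=g$. Combined with the Plemelj identification $\Phi_+=Pf$, this gives $Pf=g$ a.e., as required; note that $g\in H^1\subset L^1$ then also shows a posteriori that $Pf$ is integrable.

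The step I expect to be the main obstacle is the first one: matching the pointwise principal-value expression $Pf=\tfrac12(f+Sf)$ with the analytic boundary value $\Phi_+$. This rests on the a.e.\ existence of the singular integral for merely integrable $f$ and on the boundary-value theory of Cauchy integrals, both classical, but it has to be set up with the normalisation $\tfrac{1}{2\pi i}\,\mathrm{p.v.}\!\int_{\mathbb{T}}\frac{f(\tau)}{\tau-t}\,d\tau=\tfrac12(Sf)(t)$ so that the convention for $S$ used in the paper is respected.
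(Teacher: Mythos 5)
Your argument is correct. Note, however, that the paper itself offers no proof of this lemma: it is imported verbatim from \cite[Lemma~3.1]{K21}, so there is no in-paper argument to compare yours against. What you give is the standard (and essentially unavoidable) route: identify $Pf=\tfrac12(f+Sf)$ with the nontangential boundary value of the Cauchy integral $\Phi$ via Privalov's theorem and the Sokhotski--Plemelj formulas (valid a.e.\ for $L^1$ densities), expand the Cauchy kernel to get $\Phi(z)=\sum_{k\ge0}\widehat{f}(k)z^k$, use the hypothesis to recognise this as the holomorphic extension of $g\in H^1$, and conclude by Fatou's theorem. All the ingredients you invoke are classical and apply at the stated level of generality ($f\in L^1$, $g\in H^1$), and you correctly flag and handle the one genuinely delicate point, namely the normalisation matching $\tfrac{1}{2\pi i}\,\mathrm{p.v.}\!\int_{\mathbb{T}}\frac{f(\tau)}{\tau-t}\,d\tau=\tfrac12(Sf)(t)$, without which the factor of $\tfrac12 f$ in the jump formula would not combine to give $P$. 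The only thing I would add for completeness is an explicit reference for the a.e.\ boundary behaviour of Cauchy integrals of $L^1$ densities (e.g.\ Privalov's classical theorem), since that is the step carrying all the analytic weight.
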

As a consequence of the results of the previous subsection
and Lemma~\ref{le:Pf=g}, we will
show that special Toeplitz operators with symbols of the form 
$\mathbf{e}_{-n}h$, where $n\in\mathbb{N}$ and $h\in H^\infty$, are bounded
on abstract Hardy spaces $H[X]$ built upon Banach function spaces $X$ 
even without the assumption that the Riesz projection $P$ is bounded on $X$.
\begin{lemma}\label{le:boundedness-T-special}
Let $X$ be a Banach function space. If $n \in \mathbb{Z}_+$ and $h \in H^\infty$, 
then the Toeplitz operator $T(\mathbf{e}_{-n} h) : H[X] \to H[X]$ is bounded.
\end{lemma}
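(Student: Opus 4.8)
The plan is to find an explicit formula for $T(\mathbf{e}_{-n}h)f$ that avoids any reference to the (possibly unbounded) action of the Riesz projection on $X$, expressing the operator instead through multiplications by bounded functions together with the operator $P_n$, which is bounded on $X$ by Corollary~\ref{cor:boundedness-Pn}. Concretely, I expect the identity
\[
T(\mathbf{e}_{-n}h)f = \mathbf{e}_{-n}\bigl(hf - P_n(hf)\bigr),
\quad f \in H[X],
\]
to hold, with the convention $P_0 := 0$ (the empty sum), so that the case $n = 0$ reduces to $T(h)f = hf$.

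First I would verify that this candidate really equals $P(\mathbf{e}_{-n}hf)$ by invoking Lemma~\ref{le:Pf=g}. Since $f \in H[X] \subseteq H^1$ and $h \in H^\infty$, the product $hf$ lies in $H^1$; hence $hf - P_n(hf)$ lies in $H^1$ with Fourier coefficients vanishing for indices below $n$, and multiplication by $\mathbf{e}_{-n}$ turns it into a function $g := \mathbf{e}_{-n}(hf - P_n(hf)) \in H^1$. A direct comparison of Fourier coefficients, using $\widehat{(\mathbf{e}_{-n}u)}(k) = \widehat{u}(k+n)$ together with $\widehat{P_n(hf)}(j) = \widehat{hf}(j)$ for $0 \le j \le n-1$, shows that $\widehat{g}(k) = \widehat{hf}(k+n) = \widehat{(\mathbf{e}_{-n}hf)}(k)$ for all $k \ge 0$. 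Lemma~\ref{le:Pf=g} then yields $P(\mathbf{e}_{-n}hf) = g$, which is the displayed identity.

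Granted the formula, the boundedness estimate is routine. Since $|\mathbf{e}_{-n}| = 1$ a.e. on $\mathbb{T}$, the lattice property (A2) makes multiplication by $\mathbf{e}_{-n}$ an isometry of $X$; likewise $|hf| \le \|h\|_{L^\infty}|f|$ a.e. gives $\|hf\|_X \le \|h\|_{L^\infty}\|f\|_X$. Combining this with Corollary~\ref{cor:boundedness-Pn}, I would obtain
\[
\|T(\mathbf{e}_{-n}h)f\|_X
= \|hf - P_n(hf)\|_X
\le \bigl(1 + \|P_n\|_{\mathcal{B}(X,H[X])}\bigr)\|h\|_{L^\infty}\|f\|_X ,
\]
and since $g \in H^1 \cap X = H[X]$, the operator indeed maps $H[X]$ into itself boundedly.

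The main obstacle, and the whole point of the argument, is that $P$ is \emph{not} assumed bounded on $X$, so one cannot simply estimate $\|P(\mathbf{e}_{-n}hf)\|_X \le \|P\|_{\mathcal{B}(X)}\|\mathbf{e}_{-n}hf\|_X$. The content lies in removing $P$ from the picture via the explicit formula; the only slightly delicate checks are that $hf \in H^1$ (a standard Hardy-space fact) and that the candidate $g$ genuinely lies in $H^1$, which is precisely what makes Lemma~\ref{le:Pf=g} applicable.
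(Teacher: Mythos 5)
Your proof is correct and takes essentially the same route as the paper's: both arrive at the identity $T(\mathbf{e}_{-n}h)f=\mathbf{e}_{-n}(I-P_n)(hf)$ via Lemma~\ref{le:Pf=g} and then estimate using Corollary~\ref{cor:boundedness-Pn} and the fact that multiplication by $\mathbf{e}_{-n}$ is an isometry of $X$. The only cosmetic difference is that the paper first splits $\mathbf{e}_{-n}hf$ into $\mathbf{e}_{-n}P_n(hf)+\mathbf{e}_{-n}(I-P_n)(hf)$ and applies $P$ to each piece, whereas you verify the formula by comparing Fourier coefficients of the candidate directly.
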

\begin{proof}
Let $f\in H[X]\subset H^1$. Since $h\in H^\infty$, it follows from
\cite[Section~3.3.1, properties (a), (g)]{N19} that $hf\in H^1$. 
In the case $n=0$, Lemma~\ref{le:Pf=g} implies that
\begin{equation}\label{eq:boundedness-T-special-1}
T(h)f=P(hf)=hf
\end{equation}
and
\[
\|T(h)f\|_{H[X]}=\|hf\|_{H[X]}\le \|h\|_{L^\infty}\|f\|_{H[X]},
\]
whence
\begin{equation}\label{eq:boundedness-T-special-2}
\|T(h)\|_{\mathcal{B}(H[X])}\le \|h\|_{L^\infty}.
\end{equation}

If $n\in\mathbb{N}$, then
\begin{align*}
T(\mathbf{e}_{-n} h) f 
= 
P(\mathbf{e}_{-n} h f) =  P(\mathbf{e}_{-n} P_n(h f)) + P(\mathbf{e}_{-n} (I - P_n)(h f)) .
\end{align*}
It follows from the definition of $P_n$ that
\begin{align}\label{eq:boundedness-T-special-add}
& (\mathbf{e}_{-n} P_n(h f))\widehat{\hspace{2mm}}(m) = 0, 
\quad m\in\mathbb{Z}_+;
\\
& (\mathbf{e}_{-n} (I-P_n)(h f))\widehat{\hspace{2mm}}(m) = 0, 
\quad 
m\in\mathbb{Z}\setminus\mathbb{Z}_+. \nonumber
\end{align}
Hence 
\[
P(\mathbf{e}_{-n} P_n(h f)) = 0,
\quad
P(\mathbf{e}_{-n} (I - P_n)(h f)) = \mathbf{e}_{-n} (I - P_n)(h f)
\]
(see Lemma~\ref{le:Pf=g}). So,
\begin{equation}\label{eq:boundendess-T-special-6}
T(\mathbf{e}_{-n} h) f 
= 
P(\mathbf{e}_{-n} h f) = \mathbf{e}_{-n} (I - P_n)(hf) .
\end{equation} 
Hence, taking into account Corollary~\ref{cor:boundedness-Pn}, we obtain
\begin{align*}
\|T(\mathbf{e}_{-n} h) f\|_{H[X]} 
& =
\|T(\mathbf{e}_{-n} h) f\|_X 
=
\|\mathbf{e}_{-n} (I - P_n)(hf)\|_X \\
& =
\|(I - P_n)(hf)\|_X 
\le 
\left(1 + \|P_n\|_{\mathcal{B}(X,H[X])}\right) \|hf\|_X 
\\
& = 
\left(1 + \|P_n\|_{\mathcal{B}(X,H[X])}\right) 
\|h\|_{L^\infty} \|f\|_{H[X]} .
\end{align*}
So, $T(\mathbf{e}_{-n} h) \in\mathcal{B}(H[X])$ and 
\[
\|T(\mathbf{e}_{-n} h)\|_{\mathcal{B}(H[X])} 
\le 
\left(1 + \|P_n\|_{\mathcal{B}(X,H[X])}\right) \|h\|_{L^\infty},
\]
which completes the proof.
\end{proof}
The above lemma can be complemented by the following 
(cf. \cite[Proposition~2.14]{BS06}).
\begin{lemma}\label{le:factorization}
Let $X$ be a Banach function space. If $n\in\mathbb{Z}_+$ and $h\in H^\infty$,
then
\[
T(\mathbf{e}_{-1})T(\mathbf{e}_{-n}h)=T(\mathbf{e}_{-n-1}h)
\]
on the space $H[X]$.
\end{lemma}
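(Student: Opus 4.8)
The plan is to verify the identity $T(\mathbf{e}_{-1})T(\mathbf{e}_{-n}h)=T(\mathbf{e}_{-n-1}h)$ by evaluating both sides on an arbitrary $f\in H[X]$ and comparing the resulting functions directly, using the explicit formula for the action of these special Toeplitz operators established in Lemma~\ref{le:boundedness-T-special}. Since all operators involved are bounded by that lemma, it suffices to prove the equality pointwise for each $f\in H[X]$.

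First I would treat the right-hand side. By \eqref{eq:boundendess-T-special-6} applied with $n$ replaced by $n+1$ (and noting the case $n=0$ via \eqref{eq:boundedness-T-special-1} if needed), one has
\begin{equation}\label{eq:plan-RHS}
T(\mathbf{e}_{-n-1}h)f=\mathbf{e}_{-n-1}(I-P_{n+1})(hf).
\end{equation}
For the left-hand side, I would first apply $T(\mathbf{e}_{-n}h)$ to get $T(\mathbf{e}_{-n}h)f=\mathbf{e}_{-n}(I-P_n)(hf)=:\varphi$, again by \eqref{eq:boundendess-T-special-6}. This $\varphi$ lies in $H[X]$, so $T(\mathbf{e}_{-1})\varphi=P(\mathbf{e}_{-1}\varphi)$ is well-defined. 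I then need to compute $P(\mathbf{e}_{-1}\varphi)=P(\mathbf{e}_{-n-1}(I-P_n)(hf))$.

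The key step is to identify which Fourier modes survive the Riesz projection. Writing $g:=hf\in H^1$, the function $(I-P_n)g$ has vanishing Fourier coefficients for indices $0,1,\dots,n-1$, so $\mathbf{e}_{-n-1}(I-P_n)g$ has a Fourier expansion supported on indices $m\le -1$ coming from the removed part and indices $m\ge n-(n+1)=-1$ from the retained part; more precisely the nonnegative-index part of $\mathbf{e}_{-n-1}(I-P_n)g$ coincides with that of $\mathbf{e}_{-n-1}g$ minus the contribution of $\mathbf{e}_{-n-1}P_ng$, and the latter is supported only on negative indices since $P_n g$ involves modes $0,\dots,n-1$. Using Lemma~\ref{le:Pf=g} as in the derivation of \eqref{eq:boundedness-T-special-add}, the Riesz projection annihilates the purely-negative-index pieces, leaving exactly $\mathbf{e}_{-n-1}(I-P_{n+1})g$, which matches \eqref{eq:plan-RHS}.

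The main obstacle will be the bookkeeping of Fourier indices: I must track carefully that applying $\mathbf{e}_{-1}$ to $\varphi=\mathbf{e}_{-n}(I-P_n)g$ and projecting produces the coefficient $\widehat{g}(n)$ being removed together with the already-absent $\widehat{g}(0),\dots,\widehat{g}(n-1)$, so that the composite removes precisely the modes $0,\dots,n$, i.e. acts as $I-P_{n+1}$. Concretely I expect to argue that $\mathbf{e}_{-1}\varphi=\mathbf{e}_{-n-1}(I-P_n)g$ and that $P_{n+1}g-P_n g=\widehat{g}(n)\mathbf{e}_n$, so $\mathbf{e}_{-n-1}(P_{n+1}-P_n)g=\widehat{g}(n)\mathbf{e}_{-1}$ has only a negative-index mode and is thus killed by $P$; this reduces the identity to \eqref{eq:plan-RHS}. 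Since the boundedness of all operators is already guaranteed, no density or continuity argument beyond the pointwise identity is required.
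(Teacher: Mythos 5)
Your proposal is correct and follows essentially the same route as the paper: both apply \eqref{eq:boundendess-T-special-6} to write $T(\mathbf{e}_{-n}h)f=\mathbf{e}_{-n}(I-P_n)(hf)$, split $I-P_n=(I-P_{n+1})+(P_{n+1}-P_n)$, and observe that the term $\widehat{hf}(n)\mathbf{e}_{-1}$ is annihilated. The only cosmetic difference is that the paper computes $T(\mathbf{e}_{-1})\varphi$ as $\mathbf{e}_{-1}(I-P_1)\varphi$ and cancels the resulting two terms, while you apply $P$ directly to $\mathbf{e}_{-1}\varphi$ via Lemma~\ref{le:Pf=g}.
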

\begin{proof}
It follows from Lemma~\ref{le:boundedness-T-special} that
the Toeplitz operators $T(\mathbf{e}_{-1})$, $T(\mathbf{e}_{-n}h)$ and
$T(\mathbf{e}_{-n-1}h)$ are bounded on the space $H[X]$. Let $f\in H[X]$.
If $n=0$, then it follows from \eqref{eq:boundedness-T-special-1}
that
\[
T(\mathbf{e}_{-1})T(h)f
=
T(\mathbf{e}_{-1})(hf)
=
P(\mathbf{e}_{-1}hf)
=
T(\mathbf{e}_{-1}h)f.
\]
If $n\in\mathbb{N}$, then 
\eqref{eq:boundendess-T-special-6} and \eqref{eq:boundedness-T-special-add}
imply that
\begin{align*}
T(\mathbf{e}_{-1})T(\mathbf{e}_{-n}h)f
=&
\mathbf{e}_{-1}(I-P_1)(T(\mathbf{e}_{-n}h)f)
\\
=&
\mathbf{e}_{-1}\left[
T(\mathbf{e}_{-n}h)f-(T(\mathbf{e}_{-n}h)f)\widehat{\hspace{2mm}}(0)
\right]
\\
=&
\mathbf{e}_{-1} \left[
\mathbf{e}_{-n}(I-P_n)(hf)
-
(\mathbf{e}_{-n}(I-P_n)(hf))\widehat{\hspace{2mm}}(0)
\right]
\\
=&
\mathbf{e}_{-n-1}(I-P_n)(hf)
-
\mathbf{e}_{-1}(\mathbf{e}_{-n}hf)\widehat{\hspace{2mm}}(0)
\\
=&
\mathbf{e}_{-n-1}(I-P_{n+1})(hf)
\\
&+
\mathbf{e}_{-n-1}(P_{n+1}-P_n)(hf)
-
\mathbf{e}_{-1}(\mathbf{e}_{-n}hf)\widehat{\hspace{2mm}}(0)
\\
=&
T(\mathbf{e}_{-n-1}h)f
+
\mathbf{e}_{-n-1}\widehat{hf}(n)\mathbf{e}_n
-
\mathbf{e}_{-1}\widehat{hf}(n)
\\
=&
T(\mathbf{e}_{-n-1}h)f,
\end{align*}
which completes the proof.
\end{proof}
\section{Proof of the main result}\label{sec:proof-of-BKS}
\subsection{Extending an observation by B\"ottcher, Krupnik, and Silbermann}
We start with the following auxiliary result, containing the essence 
of the argument in \cite[p.~472]{BKS88}, in which 
we do not assume the boundedness of the Riesz projection on a
Banach function space $X$.
\begin{lemma}\label{le:BKS}
Let $X$ be a Banach function space. If
$\|T(\mathbf{e}_{-1})\|_{\mathcal{B}(H[X]),\mathrm{e}} = 1$,
then $\|T(a)\|_{\mathcal{B}(H[X]),\mathrm{e}}  \le \|a\|_{L^\infty}$
for every Toeplitz operator $T(a):H[X]\to H[X]$ with
$a \in \{\mathbf{e}_{-n}h : n\in\mathbb{N}, h\in H^\infty\}$.
\end{lemma}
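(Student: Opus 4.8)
The plan is to exploit the multiplicativity established in Lemma~\ref{le:factorization}, which gives $T(\mathbf{e}_{-n}h) = T(\mathbf{e}_{-1})^n\, T(h)$ on $H[X]$. Indeed, iterating $T(\mathbf{e}_{-1})T(\mathbf{e}_{-k}h) = T(\mathbf{e}_{-k-1}h)$ starting from $k=0$ yields $T(\mathbf{e}_{-n}h) = T(\mathbf{e}_{-1})^n T(h)$ for every $n\in\mathbb{N}$ and $h\in H^\infty$. The essential norm is submultiplicative on the Calkin algebra, so
\[
\|T(\mathbf{e}_{-n}h)\|_{\mathcal{B}(H[X]),\mathrm{e}}
\le
\|T(\mathbf{e}_{-1})\|_{\mathcal{B}(H[X]),\mathrm{e}}^{\,n}\,
\|T(h)\|_{\mathcal{B}(H[X]),\mathrm{e}}.
\]
Under the hypothesis $\|T(\mathbf{e}_{-1})\|_{\mathcal{B}(H[X]),\mathrm{e}} = 1$, the factor involving the backward shift disappears entirely, reducing the problem to estimating $\|T(h)\|_{\mathcal{B}(H[X]),\mathrm{e}}$ for $h\in H^\infty$.

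The remaining estimate is immediate from the analytic case already handled in the proof of Lemma~\ref{le:boundedness-T-special}. There, equation~\eqref{eq:boundedness-T-special-2} established $\|T(h)\|_{\mathcal{B}(H[X])} \le \|h\|_{L^\infty}$, and since the essential norm never exceeds the operator norm, we get $\|T(h)\|_{\mathcal{B}(H[X]),\mathrm{e}} \le \|h\|_{L^\infty}$. Combining this with the displayed submultiplicative bound gives
\[
\|T(\mathbf{e}_{-n}h)\|_{\mathcal{B}(H[X]),\mathrm{e}}
\le
\|h\|_{L^\infty}.
\]

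To finish, I would identify $\|h\|_{L^\infty}$ with $\|\mathbf{e}_{-n}h\|_{L^\infty}$, which holds because $|\mathbf{e}_{-n}(t)| = |t^{-n}| = 1$ for a.e. $t\in\mathbb{T}$, so multiplication by the unimodular function $\mathbf{e}_{-n}$ preserves the essential supremum. Writing $a = \mathbf{e}_{-n}h$, this yields exactly $\|T(a)\|_{\mathcal{B}(H[X]),\mathrm{e}} \le \|a\|_{L^\infty}$, as required.

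The argument is essentially free of obstacles once Lemma~\ref{le:factorization} is available: the genuine work was already invested in proving the factorization identity and the boundedness in Lemma~\ref{le:boundedness-T-special}. The only point requiring a moment's care is the passage from the operator-norm bound on $T(h)$ to the corresponding essential-norm bound, together with the clean use of submultiplicativity of the essential norm in the Calkin algebra; neither presents any real difficulty. I would simply present the three displays above in sequence, noting the base case $n=0$ (where $a=h\in H^\infty$) is covered directly by \eqref{eq:boundedness-T-special-2} without invoking the shift.
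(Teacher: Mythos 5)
Your proposal is correct and follows essentially the same route as the paper: iterate Lemma~\ref{le:factorization} to write $T(\mathbf{e}_{-n}h)=(T(\mathbf{e}_{-1}))^nT(h)$, apply submultiplicativity of the essential norm together with the hypothesis, and bound $\|T(h)\|_{\mathcal{B}(H[X]),\mathrm{e}}$ by the operator norm via \eqref{eq:boundedness-T-special-2}, finishing with $\|h\|_{L^\infty}=\|\mathbf{e}_{-n}h\|_{L^\infty}$. The only cosmetic difference is your remark about the case $n=0$, which is not needed since the statement only concerns $n\in\mathbb{N}$.
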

\begin{proof}
Let $a=\mathbf{e}_{-n}h$ for some $n\in\mathbb{N}$ and $h\in H^\infty$.
By Lemma~\ref{le:factorization} and \eqref{eq:boundedness-T-special-2}, 
we have
\begin{align*}
\|T(a)\|_{\mathcal{B}(H[X]),\mathrm{e}} 
& = 
\|T(\mathbf{e}_{-n}h)\|_{\mathcal{B}(H[X]),\mathrm{e}} 
=  
\|(T(\mathbf{e}_{-1}))^nT(h)\|_{\mathcal{B}(H[X]),\mathrm{e}}
\\
& \le 
\|T(\mathbf{e}_{-1})\|_{\mathcal{B}(H[X]),\mathrm{e}}^n 
\|T(h)\|_{\mathcal{B}(H[X]),\mathrm{e}} 
=
\|T(h)\|_{\mathcal{B}(H[X]),\mathrm{e}} 
\\
& \le 
\|T(h)\|_{\mathcal{B}(H[X])} 
\le  
\|h\|_{L^\infty} 
= 
\|a\|_{L^\infty},
\end{align*}
which completes the proof.
\end{proof}
\subsection{Proof of Theorem~\ref{th:BKS}}
It is clear that (ii) implies (i). Suppose (i) holds and $a\in C+H^\infty$. Since 
the set $G:=\{\mathbf{e}_{-n}h : n\in\mathbb{N}, h\in H^\infty\}$ is
dense in $C + H^\infty$ (see, e.g., \cite[Ch. IX, Theorem 2.2]{G07}), 
there is a sequence $\{a_m\}$ of elements of $G$ such that
$\|a-a_m\|_{L^\infty}\to 0$ as $m\to\infty$. By Lemma~\ref{le:BKS},
$\|T(a_m)\|_{\mathcal{B}(H[X]),\mathrm{e}}  \le \|a_m\|_{L^\infty}$
for all $m\in \mathbb{N}$.
On the other hand, it follows from \cite[Theorem~5.2]{KS23-IEOT}
that $\|a_m\|_{L^\infty} \le \|T(a_m)\|_{\mathcal{B}(H[X]),\mathrm{e}}$
for all $m\in\mathbb{N}$. Thus 
$\|T(a_m)\|_{\mathcal{B}(H[X]),\mathrm{e}}  = \|a_m\|_{L^\infty}$
for all $m\in \mathbb{N}$. Since
\[
\left|\|T(a)\|_{\mathcal{B}(H[X]),\mathrm{e}}
-
\|T(a_m)\|_{\mathcal{B}(H[X]),\mathrm{e}}
\right|
\le
\|T(a-a_m)\|_{\mathcal{B}(H[X]),\mathrm{e}} 
\le 
\|P\|_{\mathcal{B}(X)} \|a-a_m\|_{L^\infty}
\]
and $\left|\,
\|a\|_{L^\infty}-\|a_m\|_{L^\infty}
\right|
\le 
\|a-a_m\|_{L^\infty}$ for all $m\in\mathbb{N}$ and $\|a-a_m\|_{L^\infty}\to 0$
as $m\to\infty$, we get
\[
\|T(a)\|_{\mathcal{B}(H[X]),\mathrm{e}}
=
\lim_{n\to\infty}\|T(a_m)\|_{\mathcal{B}(H[X]),\mathrm{e}}
=
\lim_{m\to\infty}\|a_m\|_{L^\infty}=\|a\|_{L^\infty},
\]
which completes the proof of (ii).
\qed
\section*{Acknowledgements}
This work is funded by national funds through the FCT - Funda\c{c}\~ao para a 
Ci\^encia e a Tecnologia, I.P., under the scope of the projects UIDB/00297/2020 
(\url{https://doi.org/10.54499/UIDB/00297/2020})
and UIDP/ 00297/2020 
(\url{https://doi.org/10.54499/UIDP/00297/2020})
(Center for Mathematics and Applications).
\bibliographystyle{abbrv}
\bibliography{OKES-BSMM} 
\end{document}